\newtheorem{theorem}{Theorem}[section]
\newtheorem{conjecture}[theorem]{Conjecture}
\newtheorem{lemma}[theorem]{Lemma}
\newtheorem{corollary}[theorem]{Corollary}
\theoremstyle{definition}
\theoremstyle{remark}
\def \Hom {\operatorname{Hom}}
\def \HDE {\operatorname{HDE}}
\date{\today}
\title{Proof of the Erd\H{o}s-Simonovits conjecture on walks}
\author{Grigoriy Blekherman}
\address{School of Mathematics, Georgia Institute of Technology,
686 Cherry Street
Atlanta, GA 30332}\email{greg@math.gatech.edu}
\author{Annie Raymond}
\address{Department of Mathematics and Statistics,
Lederle Graduate Research Tower, 1623D,
University of Massachusetts Amherst
710 N. Pleasant Street
Amherst, MA 01003} \email{raymond@math.umass.edu}
\thanks{The authors are indebted to Mike Tait who shared this conjecture with them. Grigoriy Blekherman was partially supported by NSF grant DMS-1901950.}
\begin{document}

\begin{abstract}
Let $G^n$ be a graph on $n$ vertices and let $w_k(G^n)$ denote the number of walks of length $k$ in $G^n$ divided by $n$. Erd\H{o}s and Simonovits conjectured that $w_k(G^n)^t \geq w_t(G^n)^k$ when $k\geq t$ and both $t$ and $k$ are odd. We prove this conjecture.
\end{abstract}

\maketitle

%-----------------------------------------------------------------------------------
\section{Introduction}

Let $G^n$ be a graph on $n$ vertices, let $e(G^n)$ be the number of edges in $G^n$, and let $w_k(G^n)$ denote the number of walks of length $k$ (i.e., with $k$ edges) in $G^n$ divided by $n$. In \cite{erdossimonovits}, Conjecture 6 reads as follows:

\begin{conjecture}[Erd\H{o}s-Simonovits, 1982]\label{conj:es}
If $d$ is the average degree in $G^n$, i.e., $d=\frac{2e(G^n)}{n}$ then 
$$w_k(G^n) \geq d^k,$$ further if $k \geq t$, and both $t$ and $k$ are odd, then
$$w_k(G^n)^t \geq w_t(G^n)^k.$$
\end{conjecture}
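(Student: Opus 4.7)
My plan is to reformulate the inequality $w_k(G^n)^t \geq w_t(G^n)^k$ as the monotonicity of $w_k(G^n)^{1/k}$ along odd $k$, and to leverage the first part of the conjecture (the classical Blakley--Roy inequality $w_k(G^n) \geq d^k$) in a non-trivial way. The crucial structural observation is that for any odd integer $t$, the matrix $A^t$ is itself non-negative and symmetric (its entries count walks of length $t$), so one can apply any inequality for non-negative symmetric matrices to $A^t$ just as to $A$.

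The first step would be to apply Blakley--Roy to the non-negative symmetric matrix $B := A^t$ with the non-negative all-ones vector $\mathbf{1}$:
\[
(\mathbf{1}^T B^m \mathbf{1})(\mathbf{1}^T \mathbf{1})^{m-1} \geq (\mathbf{1}^T B \mathbf{1})^m,
\]
which unwraps to $w_{mt}(G^n) \geq w_t(G^n)^m$ for every integer $m \geq 1$. This already yields the conjecture in the case $k = mt$ (i.e.\ when $k$ is an odd multiple of $t$, which requires $m$ odd). To enlarge the range of accessible indices, I would also apply Blakley--Roy to $B = A^t$ with the non-negative vector $x = A^s \mathbf{1}$ for $s \geq 0$; this yields a two-parameter family
\[
w_{mt + 2s}(G^n) \cdot w_{2s}(G^n)^{m-1} \geq w_{t + 2s}(G^n)^m,
\]
providing lower bounds on $w_k(G^n)$ for many $k$ of the form $mt+2s$, and in particular for every odd $k \geq t$ by a suitable choice of parameters.

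To extract the desired inequality $w_k(G^n)^t \geq w_t(G^n)^k$ from these bounds, I would combine them (possibly via a weighted geometric mean, or an induction on the gap $k - t$) using as an auxiliary ingredient the Cauchy--Schwarz bound $w_{a+b}(G^n)^2 \leq w_{2a}(G^n) w_{2b}(G^n)$. The latter follows from the Gram-matrix identity $n w_{a+b}(G^n) = (A^a \mathbf{1})^T (A^b \mathbf{1})$, which says exactly that the Hankel matrix $(w_{i+j}(G^n))_{i,j \geq 0}$ is positive semidefinite. This auxiliary inequality is what should allow the various residual even-indexed factors $w_{2s}(G^n)$ appearing in the Blakley--Roy bounds to be cancelled against one another.

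The main obstacle, and where I expect the core novelty of the proof to lie, is precisely this combination step. The Blakley--Roy bounds above give a rich family of inequalities, but none of them on its own is tight enough to imply $w_k^t \geq w_t^k$ when $k$ is not a multiple of $t$; naive attempts end up requiring cross-parity inequalities such as $w_t^{t-1} \geq w_{t-1}^t$ that fail for concrete graphs (e.g.\ the path $P_3$). The challenge is therefore to find a weighting of the Blakley--Roy and Hankel inequalities that is both sharp enough to eliminate the even-indexed residuals and robust enough to work uniformly for all odd $k \geq t$, relying fundamentally on the oddness assumption which ensures that $A^t$ remains non-negative.
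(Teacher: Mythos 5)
Your proposal is not a complete proof: by your own admission the decisive combination step is missing, and that step is exactly where all the difficulty of the conjecture lives. Your reduction of the problem to monotonicity of $w_k(G^n)^{1/k}$ along odd $k$ is sound and parallels the paper's Lemma \ref{lem:suffices} (telescoping over consecutive odd indices), so everything hinges on the single inequality $w_{t+2}(G^n)^{t} \geq w_t(G^n)^{t+2}$ for odd $t$. But the inequalities you actually establish never touch this case. Applying Blakley--Roy to $B=A^t$ with test vector $A^s\mathbf{1}$ does give $w_{mt+2s}\,w_{2s}^{m-1}\geq w_{t+2s}^{m}$, and the case $k=mt$ of the conjecture follows; however, for the target index $k=t+2$ and $t>2$ the only admissible parameters are $m=1$, $s=1$, where the inequality degenerates to the tautology $w_{t+2}\geq w_{t+2}$, while $m\geq 2$ forces $s<0$. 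Moreover the family bounds $w_{mt+2s}$ in terms of $w_{t+2s}$ and $w_{2s}$, not in terms of $w_t$, so converting it into $w_k\geq w_t^{k/t}$ runs straight into the cross-parity comparisons (such as $w_t^{t-1}\geq w_{t-1}^{t}$) that you correctly note are false. The Cauchy--Schwarz/Hankel inequalities $w_{a+b}^2\leq w_{2a}w_{2b}$ bound odd-indexed walk counts from above by even-indexed ones, which is the wrong direction for producing the needed lower bound on $w_{t+2}$; no explicit weighting or induction scheme is exhibited, and ``find a weighting that is sharp and robust enough'' is a restatement of the problem rather than an argument.

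The paper closes exactly this gap by a mechanism absent from your toolkit: it rewrites the key inequality as $P_0^2P_{t+2}^{t}\geq P_t^{t+2}$ in homomorphism counts and proves $\HDE(P_0^2P_{t+2}^{t};P_t)=t+2$ using Kopparty and Rossman's entropy/linear-programming characterization of the homomorphism domination exponent (applicable because $P_0^2P_{t+2}^{t}$ is chordal and $P_t$ is series-parallel), exhibiting an explicit polymatroidal point $p^*$ for the upper bound and an explicit homomorphism $\psi$ together with Lemma \ref{pall} for the lower bound. Some ingredient of this strength appears necessary: manipulations built only from Blakley--Roy applied to powers of $A$ and from positive semidefiniteness of the Hankel matrix use the adjacency structure only through spectral data with nonnegative weights, and it is precisely the negative eigenvalues that make the odd case resistant to such arguments --- which is consistent with the conjecture having remained open despite these classical tools being long available. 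As it stands, your proposal proves the case $k=mt$ and sets up a plausible framework, but the core inequality $w_{t+2}^{t}\geq w_t^{t+2}$ is unproven, so the conjecture is not established.
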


Erd\H{o}s and Simonovits mention that the first inequality in the conjecture had already been proven in \cite{blakleyroy}, \cite{london} and \cite{mulhollandsmith}. Today, it is known as the Blakley-Roy inequality. They then go on to remark that the second inequality in the conjecture is a generalization of the first, and it is known to hold when $k$ is even and they give a proof due to C.D. Godsil. The authors finally point out that the second inequality does not hold when $k$ is odd and $t$ is even.

In this paper, we prove the remaining case of the conjecture: we prove the second inequality when both $t$ and $k$ are odd. To do so, we reformulate the conjecture in terms of numbers of graph homomorphisms, and then apply a theorem from Kopparty and Rossman \cite{koppartyrossman}. We present the background needed in Section \ref{background}, and the proof in Section \ref{proof}.

%-----------------------------------------------------------------------------------
\section{Reformulation and a theorem by Kopparty and Rossman}\label{background}

Let $V(G)$ and $E(G)$ denote respectively the vertex set and the edge set of a graph $G$. A $\emph{graph homomorphism}$ from a graph $F$ to a graph $G$ is a map from the vertex set of $F$ to the vertex set of $G$ that sends edges to edges, i.e., that preserves adjacency. More precisely, a graph homomorphism is a function $\varphi : V(F) \rightarrow V(G)$ such that for any edge $\{v_1, v_2\}\in E(F)$, $\{\varphi(v_1), \varphi(v_2)\}\in E(G)$. Let $\Hom(F;G)$ be the set of homomorphisms from $F$ to $G$. Let $t(F;G)$ be the homomorphism density of $F$ in $G$, i.e., the probability that a random map from the $V(F)$ to $V(G)$ is a graph homomorphism. Note that $t(F;G)=\frac{|\Hom(F;G)|}{|V(G)|^{|V(F)|}}$. One well-known property is that $|\Hom(F_1;G)|\cdot |\Hom(F_2;G)| = |\Hom(F_1F_2;G)|$ and $t(F_1;G)\cdot t(F_2;G) = t(F_1F_2;G)$ where $F_1F_2$ denotes the disjoint union of $F_1$ and $F_2$. 

In this paper, $G$ will normally vary over all graphs on $n$ vertices. To lighten the notation, in inequalities, we will write $F$ to mean the function that can be evaluated on graphs $G$ by taking the number of homomorphisms from $F$ to $G$. The property $|\Hom(F_1;G)|\cdot |\Hom(F_2;G)| = |\Hom(F_1F_2;G)|$ thus becomes $F_1 \cdot F_2 = F_1F_2$.

Let $P_k$ be the function that evaluates the number of homomorphisms from a path with $k$ edges to some graph $G$ on $n$ vertices. Note that $\frac{P_k}{n} = w_k(G)$. When $k=0$, $P_0$ is a single vertex (i.e., a 0-path), and thus $P_0=n$.  The second part of Conjecture 6 from \cite{erdossimonovits} can thus be reformulated as $$\left(\frac{P_k}{n}\right)^t \geq \left(\frac{P_t}{n}\right)^k$$ when $k \geq t$ and both $t$ and $k$ are odd. Another way to formulate the conjecture is to say that $n^{k-t} {P_k}^t \geq {P_t}^k$ or that $P_0^{k-t} P_k^t \geq P_t^k$  for all $k\geq t$ where $t$ and $k$ are both odd. Finally, observe that by dividing $n^{k-t} P_k^t \geq P_t^k$ by $n^{k(t+1)}$ on both sides, we obtain $t(P_k^t; G^n) \geq t(P_t^k; G^n)$ or equivalently $t(P_k; G^n)^t \geq t(P_t; G^n)^k$, which thus yields another way of formulating the conjecture.

\begin{lemma}\label{lem:suffices}
To prove the conjecture, it suffices to show that $P_0^2 P_{t+2}^{t} \geq P_{t}^{t+2}$ for any odd $t+2$. 
\end{lemma}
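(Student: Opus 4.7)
The plan is a single induction that bootstraps the assumed inequalities $P_0^2 P_{t+2}^t \geq P_t^{t+2}$ for consecutive odd pairs into the full family of inequalities for all odd $k \geq t$. Fix an odd $t \geq 1$ and induct on $k$ through the odd values $t, t+2, t+4, \ldots$. The base case $k = t$ is the trivial equality $P_t^t \geq P_t^t$. For the inductive step, suppose $k \geq t+2$ is odd and that the conjecture has been established for the pair $(t, k-2)$, giving $P_0^{k-2-t} P_{k-2}^t \geq P_t^{k-2}$. Applying the assumed special case to the pair $(k-2, k)$, which is legitimate because $k$ is odd, yields $P_0^2 P_k^{k-2} \geq P_{k-2}^k$.

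I would then raise the inductive inequality to the $k$-th power, raise the special case to the $t$-th power, and multiply the two. The common factor $P_{k-2}^{kt}$ appears on both sides and cancels, and the exponents on $P_0$ combine via the identity $(k-2-t)k + 2t = (k-2)(k-t)$, leaving $P_0^{(k-2)(k-t)} P_k^{(k-2)t} \geq P_t^{(k-2)k}$. Since all homomorphism counts are non-negative reals and $k - 2 \geq 1$, taking $(k-2)$-th roots gives exactly $P_0^{k-t} P_k^t \geq P_t^k$, completing the induction.

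The only delicate point is that the cancellation of $P_{k-2}^{kt}$ is only literally a division when $P_{k-2} > 0$. If instead $P_{k-2} = 0$, then $G$ has no edges (since any single edge traversed back and forth produces walks of every positive length), so $P_t = P_k = 0$ and the target inequality degenerates to $0 \geq 0$. Hence this edge case is harmless, and I expect the main content of the argument to lie in the clean exponent bookkeeping of the inductive step rather than in any substantive obstacle.
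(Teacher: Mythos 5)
Your proof is correct and takes essentially the same route as the paper: both bootstrap the consecutive-odd-pair inequality through the intermediate odd values $t, t+2, \ldots, k$, and the telescoping of exponents that the paper carries out on homomorphism densities (raising $t(P_{s+2};G^n)\geq t(P_s;G^n)^{(s+2)/s}$ to successive fractional powers) is exactly the integer-exponent bookkeeping of your inductive step. Your explicit handling of the degenerate case $P_{k-2}=0$ is a small refinement that the paper avoids by working with nonnegative densities and fractional exponents, but it does not change the substance of the argument.
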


\begin{proof}
Suppose that we know that $P_0^2 P_{t+2}^{t} \geq P_{t}^{t+2}$ for any odd $t+2$. This is equivalent to knowing that $t(P_{t+2}; G^n) \geq t(P_{t}; G^n)^{\frac{t+2}{t}}$. If $k= t+2i$ where $i>1$, then we have

\begin{align*}
t(P_k;G^n) & = t(P_{t+2i}; G^n) \\
& \geq t(P_{t+2i-2}; G^n)^{\frac{t+2i}{t+2i-2}} \\
& \geq \left(t(P_{t+2i-4};G^n)^{\frac{t-2i-2}{t-2i-4}}\right)^{\frac{t+2i}{t+2i-2}}\\
& \geq \ldots \\
& \geq \left(\left(\left(t(P_{t};G^n)^{\frac{t+2}{t}}\right)^{\frac{t+4}{t+2}} \ldots  \right)^{\frac{t-2i-2}{t-2i-4}}\right)^{\frac{t+2i}{t+2i-2}}\\
& = t(P_{t};G^n)^{\frac{t+2i}{t}} = t(P_{t};G^n)^{\frac{k}{t}}
\end{align*}
as desired. 
\end{proof}

The concept of homomorphism domination exponent was introduced in \cite{koppartyrossman}, though the idea behind it had been central to many problems in extremal graph theory for a long time. Let the \emph{homomorphism domination exponent} of a pair of graphs $F_1$ and $F_2$, denoted by $\HDE(F_1;F_2)$, be the maximum value of $c$ such that $|\Hom(F_1;G)| \geq |\Hom(F_2;G)|^c$ for every graph $G$. Thus, by Lemma \ref{lem:suffices}, to prove the conjecture, it suffices to show that $\HDE(P_0^2 P_{t+2}^{t}; P_{t})=t+2$ for any odd $t$ (where we now think simply of $P_i$ as a graph, namely the path with $i$ edges, and not as a function). 

In \cite{koppartyrossman}, Kopparty and Rossman showed that $\HDE(F_1; F_2)$ can be found by solving a linear program when $F_1$ is chordal and $F_2$ is series-parallel. Since this is the case when $F_1=P_0^2 P_{t+2}^t$ and $F_2=P_{t}$, we will use this linear program to prove the conjecture. We now briefly describe Kopparty and Rossman's result which is based on comparing the entropies of different distributions on $\Hom(F_2; G)$. We later pull back such distributions, and in particular the uniform distribution on all homomorphisms.

Let $\mathcal{P}(F_2)$ be the polytope consisting of normalized $F_2$-polymatroidal functions, which is defined to be
\begin{small}
\begin{align*}
\mathcal{P}(F_2)=\big\{p\in \mathbb{R}^{2^{|V(F_2)|}} | & p(\emptyset) = 0 &&\\
& p(V(F_2)) = 1 && \\
& p(A) \leq p(B) && \forall \  A \subseteq B \subseteq V(F_2)\\
& p(A\cap B) + p(A\cup B) \leq p(A)+p(B) && \forall \ A, B \subseteq V(F_2)\\
& p(A\cap B) + p(A\cup B) = p(A)+p(B) &&\forall \ A, B \subseteq V(F_2) \textup { such that } A\cap B \\
&&& \quad \quad \quad \textup{ separates } A\backslash B \textup{ and }B\backslash A\big\}.
\end{align*}
\end{small}
Note that $A\cap B$ is said to separate $A\backslash B$ and $B\backslash A$ if there are no edges in $F_2$ between $A\backslash B$ and $B\backslash A$. 

\begin{theorem}[Kopparty-Rossman, 2011]
Let $F_1$ be a chordal graph and let $F_2$ be a series-parallel graph. Then

$$\HDE(F_1, F_2) = \min_{p\in \mathcal{P}(F_2)} \max_{\varphi\in \Hom(F_1;F_2)}  \sum_{S \subseteq \textup{MaxCliques}(F_1)} -(-1)^{|S|} p(\varphi(\cap S))\\$$
where $\textup{MaxCliques}(F_1)$ is the set of maximal cliques of $F_1$ and $\cap S$ is the intersection of the maximal cliques in $S$. 
\end{theorem}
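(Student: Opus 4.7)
The plan is to prove both directions of the min--max equality by entropy methods, working with the uniform distribution $\pi$ on $\Hom(F_2;G)$ for arbitrary target graphs $G$. The first step is to attach to every $G$ a candidate polymatroid $p_G(A):=H(\pi|_A)/H(\pi)$ for $A\subseteq V(F_2)$ and to check $p_G\in\mathcal{P}(F_2)$: normalization, monotonicity, and submodularity are immediate from Shannon inequalities, while the crucial modularity on a separating pair $A,B$ uses that, for series-parallel $F_2$, a homomorphism extends independently on the two sides of $A\cap B$ once the separator values are fixed, so $\pi|_{A\setminus B}$ and $\pi|_{B\setminus A}$ are conditionally independent given $\pi|_{A\cap B}$.

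For the inequality $\HDE(F_1,F_2)\ge \min_p\max_\varphi(\cdots)$, I would fix a junction tree $T$ on the maximal cliques of the chordal graph $F_1$. Given $\varphi\in\Hom(F_1;F_2)$, I sample a random $\psi:V(F_1)\to V(G)$ by placing a copy of $\pi|_{\varphi(c)}$ on each maximal clique $c$ of $F_1$ and gluing consistently along $T$; separator modularity of $p_G$ forces the marginals on shared vertices to agree, so $\psi\in\Hom(F_1;G)$. By the junction-tree factorization,
\[
H(\psi)=\sum_{c}H(\pi|_{\varphi(c)})-\sum_{s}H(\pi|_{\varphi(s)}),
\]
and the running-intersection property for chordal $F_1$ rewrites this as $H(\pi)$ times the alternating Kopparty--Rossman sum over subsets of maximal cliques. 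Since $\log|\Hom(F_1;G)|\ge H(\psi)$ and $H(\pi)=\log|\Hom(F_2;G)|$, dividing and then maximizing over $\varphi$ (with $p_G$ as a particular element of $\mathcal{P}(F_2)$ in the outer min) gives the inequality.

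The reverse inequality $\HDE(F_1,F_2)\le \min_p\max_\varphi(\cdots)$ requires showing that a near-optimizer $p^\ast$ is actually realized by a graph. I would construct a sequence $G_n$, typically by suitable blowups or tensor products of small gadgets determined by $p^\ast$, with $p_{G_n}\to p^\ast$, and then derive a matching upper bound on $\log|\Hom(F_1;G_n)|$ by applying Shearer-type entropy inequalities for chordal graphs to the uniform distribution on $\Hom(F_1;G_n)$ along the junction tree of $F_1$. On the tailored $G_n$, each clique marginal $H(\psi_c)$ is bounded by $H(\pi_{\varphi^\ast(c)})$ for the maximizing $\varphi^\ast$ up to $o(1)$, yielding $\log|\Hom(F_1;G_n)|\le(\max_\varphi(\cdots)+o(1))\log|\Hom(F_2;G_n)|$.

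The main obstacle is this lower-bound half: constructing graphs whose entropy polymatroid realizes an arbitrary prescribed $p^\ast\in\mathcal{P}(F_2)$ while tightly controlling $|\Hom(F_1;G_n)|$ from above. The upper-bound half, by contrast, is a relatively clean entropy-gluing argument. Both halves depend essentially on (i) chordality of $F_1$, to obtain a junction tree and equate the alternating clique-intersection sum with the junction-tree decomposition, and (ii) series-parallelism of $F_2$, to ensure $p_G$ satisfies the separator modularity that defines $\mathcal{P}(F_2)$.
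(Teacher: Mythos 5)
This theorem is not proved in the paper at all: it is imported as a black box from Kopparty and Rossman \cite{koppartyrossman}, so there is no internal proof to compare against. Your sketch is, in outline, the strategy of the original Kopparty--Rossman paper (entropies of distributions on $\Hom(F_2;G)$, junction-tree gluing for chordal $F_1$, realizability of polymatroidal points for series-parallel $F_2$), but as written it has genuine gaps in both halves.

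The most concrete gap is the claim that $p_G(A)=H(\pi|_A)/H(\pi)$, with $\pi$ uniform on $\Hom(F_2;G)$, lies in $\mathcal{P}(F_2)$ because ``a homomorphism extends independently on the two sides of $A\cap B$.'' Under the separation condition appearing in the statement --- $A\cap B$ ``separates'' $A\setminus B$ and $B\setminus A$ as soon as there is no edge between them, even when $A\cap B$ is not a vertex cut of $F_2$ and may even be empty --- this conditional independence is false, and series-parallelness does not rescue it. For example, let $F_2=C_4$ with vertices $1,2,3,4$ in cyclic order (series-parallel), $A=\{1,2\}$, $B=\{2,3\}$: the marginal of $\pi$ on $(X_1,X_2,X_3)$ weights each triple by the codegree of $X_1,X_3$ in $G$, so $X_1$ and $X_3$ are not conditionally independent given $X_2$ (take $G=P_3$); likewise, for any non-adjacent $u,v$ the constraints force $X_u$ and $X_v$ to be unconditionally independent, which fails for the uniform distribution even when $F_2$ is a path and $G$ is, say, disconnected. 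So the ``$\ge$'' half needs either genuine cut-set separation together with an argument that the two linear programs have the same value, or a different distribution than $\pi$; this is where the real series-parallel content of Kopparty--Rossman lies, and your sketch does not engage with it. The ``$\le$'' half you explicitly leave open: realizing an arbitrary optimal $p^*\in\mathcal{P}(F_2)$ by graphs $G_n$ while simultaneously upper-bounding $|\Hom(F_1;G_n)|$ is exactly the hard part of the theorem (general polymatroids are not entropic, so the construction must exploit series-parallelness in an essential way), and ``blowups or tensor products of small gadgets'' is a statement of intent, not a proof. Two smaller points: consistency of the clique marginals along the junction tree is automatic (they are all marginals of the same pushforward of $\pi$), not a consequence of separator modularity; and the identification of the junction-tree sum with the alternating sum over subsets of maximal cliques is the chordality inclusion--exclusion identity (Lemma 2.5 of \cite{koppartyrossman}, reproved for paths as Lemma \ref{pall} here), which needs to be proved rather than gestured at via the running-intersection property.
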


%----------------------------------
\section{Proof of the conjecture}\label{proof}

Let $[m]:=\{1,2,\ldots, m\}$,  $V(P_t)=[t+1]$, and let $E(P_t)=\{\{i,i+1\}| i \in [t]\}$. Lemma 2.5 of \cite{koppartyrossman} implies that for any $p\in \mathcal{P}(P_t)$, $$p(V(P_t))=\sum_{S\subseteq\textup{MaxCliques}(P_t)} -(-1)^{|S|}p(\cap S).$$ For completeness, we give a short argument.

\begin{lemma}\label{pall}
For any $p\in \mathcal{P}(P_t)$ for some $t\geq 1$ (not necessarily odd), $$p(V(P_t))=\sum_{\{i, i+1\}\in E(P_t)}p(\{i,i+1\}) -\sum_{i \in \{2,\ldots,t\}}p(\{i\}).$$
\end{lemma}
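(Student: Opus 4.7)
The plan is to prove this identity by induction on the length of the path, peeling off one edge at a time via the polymatroidal equality in the definition of $\mathcal{P}(P_t)$. The key structural observation is that the maximal cliques of $P_t$ are exactly the edges $\{i,i+1\}$, so the separation conditions along the path are particularly clean: the vertex $\{i\}$ separates its two neighbors' halves of the path.

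Concretely, for $1 \le k \le t$ set $A_k = \{1,2,\ldots,k+1\} \subseteq V(P_t)$ and let me prove by induction on $k$ that
$$p(A_k) = \sum_{i=1}^{k} p(\{i,i+1\}) - \sum_{i=2}^{k} p(\{i\}).$$
The base case $k=1$ is immediate because $A_1 = \{1,2\}$ and the right-hand side collapses to $p(\{1,2\})$. For the inductive step, I would apply the equality axiom of $\mathcal{P}(P_t)$ to the pair $A = A_k$ and $B = \{k+1, k+2\}$. Then $A \cap B = \{k+1\}$, $A \setminus B = \{1,\ldots,k\}$, and $B \setminus A = \{k+2\}$. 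The only edges of $P_t$ incident to $k+2$ are $\{k+1,k+2\}$ and (if it exists) $\{k+2,k+3\}$, neither of whose other endpoint lies in $\{1,\ldots,k\}$, so $\{k+1\}$ separates $A \setminus B$ from $B \setminus A$. The polymatroidal equality gives
$$p(\{k+1\}) + p(A_{k+1}) = p(A_k) + p(\{k+1, k+2\}),$$
and plugging in the inductive hypothesis yields the claim for $k+1$. Setting $k = t$ recovers $A_t = V(P_t)$ and the desired formula.

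There is no real obstacle here; the only step that requires care is verifying the separation condition, which is a straightforward bookkeeping check in the path graph. Everything else is forced by the defining constraints of $\mathcal{P}(P_t)$. Note also that the same argument applies for all $t \geq 1$ with no parity assumption, as claimed in the lemma.
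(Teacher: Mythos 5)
Your proof is correct and follows essentially the same route as the paper: both peel off the last edge $\{k+1,k+2\}$ and apply the polymatroidal equality axiom with the single vertex $\{k+1\}$ as the separating intersection. The only (minor, and arguably cleaner) difference is organizational: you run the induction over prefix sets $A_k$ inside a fixed $p\in\mathcal{P}(P_t)$, whereas the paper inducts on the path length $t$ itself, applying the same identity to the restriction of $p$; the algebra is identical.
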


\begin{proof}
We prove it by induction on $t$. If $t=1$, it is trivially true since there are no negative terms to consider. Suppose it is true for $t$. Consider $P_{t+1}$. Let $A=[t+1]$ and let $B=\{t+1, t+2\}$. Then $A\cup B= [t+2]=V(P_{t+1})$, and $A\cap B= \{t+1\}$. Note that $A\cap B$ separates $A\backslash B$ and $B\backslash A$, so $p(A\cup B)= p(A)+p(B)-p(A\cap B)$. Thus
\begin{align*}
p(V(P_{t+1}))&=p(V(P_t))+p(\{t+1, t+2\})-p(\{t+1\})\\
&=p(\{1,2\})+\ldots + p(\{t,t+1\})-p(\{2\}) - \ldots -p(\{t\}) +p(\{t+1, t+2\})-p(\{t+1\})\\
&=\sum_{\{i, i+1\}\in E(P_{t+1})}p(\{i,i+1\}) -\sum_{i \in \{2,\ldots,t+1\}}p(\{i\}),
\end{align*}
where the second line follows from the induction hypothesis.
\end{proof}

\begin{theorem}\label{maintheorem}
We have that $\HDE(P_0^2 P_{t+2}^t; P_t)=t+2$, and thus that Conjecture \ref{conj:es} holds. 
\end{theorem}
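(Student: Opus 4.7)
The plan is to apply the Kopparty-Rossman minimax formula with $F_1=P_0^2P_{t+2}^t$ (chordal) and $F_2=P_t$ (series-parallel), and show the min-max value is exactly $t+2$. Only the lower bound $\HDE\geq t+2$ is needed for the conjecture via Lemma \ref{lem:suffices}, but I expect both directions to fall out naturally.

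First I would simplify the inner sum. The maximal cliques of $F_1$ are the two isolated vertices and the $t(t+2)$ edges of the path components; since $P_{t+2}$ has maximum degree $2$ and distinct components are disjoint, every $S$ with $|S|\geq 3$ or that meets two components gives $\cap S=\emptyset$. Writing $\varphi$ restricted to the $j$th copy of $P_{t+2}$ as a walk $w^{(j)}_1,\dots,w^{(j)}_{t+3}$ in $P_t$, the Kopparty-Rossman sum collapses to
\[
p(\{\varphi(v_1)\})+p(\{\varphi(v_2)\})+\sum_{j=1}^{t}\Bigl[\sum_{i=1}^{t+2}p(\{w^{(j)}_i,w^{(j)}_{i+1}\})-\sum_{i=2}^{t+2}p(\{w^{(j)}_i\})\Bigr].
\]

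For the lower bound $\HDE\geq t+2$, I would fix any $p\in\mathcal{P}(P_t)$ and construct a specific $\varphi$ of value $t+2$: send the two isolated vertices to $1$ and $t+1$, and for each $q\in\{2,3,\dots,t+1\}$ let the $q$th path component trace the single-bounce walk $\varphi_q : 1,2,\dots,q,q-1,q,q+1,\dots,t+1$ of length $t+2$ in $P_t$. A short count shows that across these $t$ walks each edge of $P_t$ is traversed exactly $t+2$ times, each interior vertex $v\in\{2,\dots,t\}$ appears as an interior position exactly $t+2$ times, and vertices $1$ and $t+1$ each appear as interior positions exactly once (in $\varphi_2$ and $\varphi_{t+1}$). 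The extra $-p(\{1\})$ and $-p(\{t+1\})$ contributions cancel against the two isolated-vertex terms, and by Lemma \ref{pall} the sum becomes $(t+2)\,p(V(P_t))=t+2$.

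For the matching upper bound $\HDE\leq t+2$, I would test the single choice $p_0(A)=|A|/(t+1)\in\mathcal{P}(P_t)$. Since the total edges traversed equal $t(t+2)$ and the total interior positions equal $t(t+1)$ independently of $\varphi$, the sum at $p_0$ evaluates to the constant $t+2$ for every $\varphi$, giving $\max_\varphi=t+2$. Combining the bounds yields $\HDE(P_0^2P_{t+2}^t;P_t)=t+2$, and the Erd\H{o}s-Simonovits conjecture then follows from Lemma \ref{lem:suffices}. The main obstacle is locating the correct family of walks for the lower bound: to make the cancellation with Lemma \ref{pall} work, each edge and each interior vertex of $P_t$ must be covered exactly $t+2$ times across the $t$ walks, and the single-bounce family of walks from $1$ to $t+1$ is the natural candidate meeting these multiplicity constraints.
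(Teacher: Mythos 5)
Your proposal is correct and follows essentially the same route as the paper: the same Kopparty--Rossman linear program, the same test polymatroid for the upper bound (your $p_0(A)=|A|/(t+1)$ is exactly the paper's $p^*$, there presented as the average of the vertex-indicator polymatroids $p_i$), and the same lower-bound construction via the family of single-bounce walks with the two copies of $P_0$ sent to the endpoints, with the endpoint cancellation and Lemma \ref{pall} giving $(t+2)\,p(V(P_t))=t+2$. The only cosmetic difference is how $p_0\in\mathcal{P}(P_t)$ is justified (direct check versus convexity of the polytope), so there is nothing substantive to add.
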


\begin{proof}
We first show that $\HDE(P_0^2 P_{t+2}^t; P_t)\leq t+2$. For $i \in [t+1]$ and $S\subseteq [t+1]$, let $p_i\in \mathbb{R}^{2^{t+1}}$ be such that $p_i(S)=1$ if $S$ contains $i$, and $p_i(S)=0$ otherwise. It's easy to check that $p_i \in \mathcal{P}(P_t)$. Let $p^*$ be the average of the $p_i$'s, i.e., $p^*= \frac{1}{t+1} \sum_{i\in [t+1]} p_i$. In particular, this means that $p^*(\{i\})= \frac{1}{t+1}$ for any $i\in [t+1]$, and $p^*(\{i,i+1\})=\frac{2}{t+1}$ for any $i\in [t]$. Since $p^*$ is a convex combination of the $p_i$'s, $p^*\in \mathcal{P}(P_t)$. For any homomorphism $\varphi$ from $P_0^2 P_{t+2}^t$ to $P_t$,

$$\sum_{S \subseteq \textup{MaxCliques}(P_0^2 P_{t+2}^t)} -(-1)^{|S|} p^*(\varphi(\cap S)) = t\cdot(t+2)\frac{2}{t+1} - t\cdot (t+1)\frac{1}{t+1} + 2\frac{1}{t+1} = t+2,$$ which implies that the optimal value of the linear program is at most $t+2$.

We now show that $\HDE(P_0^2 P_{t+2}^t; P_t)\geq t+2$. For $1\leq i \leq t$ let $\varphi_{i}$ be the homomorphism from $P_{t+2}$ to $P_t$ such that $\varphi_{i}(j)=j$ for all $j\leq i+1$, and $\varphi_{i}(j)=j-2$ for all $j\geq i+2$. In other words, every edge of $P_t$ is visited by $P_{t+2}$ once, except for $\{i, i+1\}$ which is visited three times. Let $\psi$ be the homomorphism from $P_0^2 P_{t+2}^t$ to $P_t$ such that one copy of $P_0$ gets sent to vertex $1$ in $P_t$, the other copy of $P_0$ is sent to vertex $t+1$ of $P_t$ (i.e., the two copies of $P_0$ are sent to the end vertices of $P_t$), and the $i$-th copy of $P_{t+2}$ is mapped to $P_t$ via $\varphi_i$ for $1\leq i\leq t$.

Now for any $p \in \mathcal{P}(P_t)$, we compute $$\sum_{S \subseteq \textup{MaxCliques}(P_0^2 P_{t+2}^t)} -(-1)^{|S|} p(\psi(\cap S)).$$ Observe that only sets $S$ of size one or two contribute in the above sum since no three maximal cliques of $P_0^2 P_{t+2}^t$ intersect. Every edge of $P_t$ is covered by an image of an edge of $P_{t+2}$ via $\psi$ exactly $t+2$ times. Every inner (non-end) vertex of $P_t$ is covered by an image of an inner (non-end) vertex of $P_{t+2}$ via $\psi$ exactly $t+2$ times. Note that each inner vertex of some copy of $P_{t+2}$  is the intersection of two maximal cliques (i.e., edges) of $P_{t+2}$, and thus the coefficient will be negative. Finally, the end vertices of $P_t$ are covered by an image of an inner (non-end) vertex of $P_{t+2}$ via $\psi$ exactly once each (which brings again a negative coefficient as it is the intersection of two maximal cliques), as well as once each by one copy of $P_0$ (which brings a positive coefficient as each $P_0$ is a maximal clique in itself). Thus the coefficients for the end vertices of $P_t$ are zero. Accordingly we have  
\begin{align*}
\sum_{S \subseteq \textup{MaxCliques}(P_0^2 P_{t+2}^t)} -(-1)^{|S|} p(\psi(\cap S))&=(t+2)\left(\sum_{\{i, i+1\}\in E(P_t)}p(\{i,i+1\}) -\sum_{i \in \{2,\ldots,t\}}p(\{i\})\right)\\
&=(t+2)p(V(P_t))\\
&=t+2.
\end{align*}

The second line follows from Lemma \ref{pall}, and the third line follows from $p(V(P_t))=1$ since $p\in \mathcal{P}(P_t)$. Therefore, for every $p \in \mathcal{P}(P_t)$, there is an homomorphism that yields $t+2$, so we see that $\HDE(P_0^2 P_{t+2}^t; P_t)\geq t+2$. This proves that $\HDE(P_0^2 P_{t+2}^t; P_t)= t+2$, and therefore  the conjecture holds.
\end{proof}

\begin{corollary}
We also have that $t(P_k; G^n)^t \geq t(P_t; G^n)^k$ holds.
\end{corollary}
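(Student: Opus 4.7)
The corollary is simply the homomorphism density reformulation of Conjecture~\ref{conj:es}, and Section~\ref{background} already records the equivalence between the path-count form $n^{k-t} P_k^t \geq P_t^k$ and the density form $t(P_k;G^n)^t \geq t(P_t;G^n)^k$. So my plan is to convert the integer inequality delivered by Theorem~\ref{maintheorem} into the required density statement for the base case $k=t+2$, and then invoke the bootstrap already contained in Lemma~\ref{lem:suffices}.

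Concretely, I would first apply Theorem~\ref{maintheorem} to an arbitrary graph $G^n$: since $\HDE(P_0^2 P_{t+2}^t; P_t) = t+2$, the defining inequality of the HDE gives
\[
|\Hom(P_0^2 P_{t+2}^t; G^n)| \;\geq\; |\Hom(P_t; G^n)|^{t+2}.
\]
Using $|\Hom(P_0; G^n)| = n$ together with multiplicativity of $\Hom$ over disjoint unions, this rewrites as $n^2 \, P_{t+2}(G^n)^t \geq P_t(G^n)^{t+2}$. Dividing both sides by $n^{(t+2)(t+1)}$ and using the identity $(t+2)(t+1) - t(t+3) = 2$ produces the density base case
\[
t(P_{t+2}; G^n)^t \;\geq\; t(P_t; G^n)^{t+2},
\]
equivalently $t(P_{t+2}; G^n) \geq t(P_t; G^n)^{(t+2)/t}$, for every odd $t \geq 1$.

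Finally, I would feed this base case into Lemma~\ref{lem:suffices}, whose proof is already written in density form: the telescoping exponentiation shown there propagates the inequality from $k = t+2$ to every odd $k = t + 2i \geq t$, yielding $t(P_k; G^n) \geq t(P_t; G^n)^{k/t}$, which is the claimed corollary after raising both sides to the $t$-th power. There is no real obstacle here; all the substantive work lies in Theorem~\ref{maintheorem} and the reduction Lemma~\ref{lem:suffices}, and the corollary is just the packaging of their combination in the density language.
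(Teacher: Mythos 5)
Your proposal is correct and follows exactly the route the paper intends: the corollary is just the density reformulation from Section~\ref{background}, obtained by combining the base case supplied by Theorem~\ref{maintheorem} (via the definition of $\HDE$ and multiplicativity over disjoint unions) with the bootstrapping in Lemma~\ref{lem:suffices}. The arithmetic in your normalization step ($(t+2)(t+1)-t(t+3)=2$) checks out, so nothing is missing.
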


\bibliographystyle{alpha}
\bibliography{esreferences}

 \end{document}